\newtheorem{theorem}{Theorem}
\newtheorem{proposition}{Proposition}
\newtheorem*{lem}{Lemma}
\begin{document}

\title[]{On Sublevel Set Estimates and the Laplacian} 
\keywords{Sublevel set estimate, Hessian, Laplace operator, Brownian motion, Champagne domain, Potential Theory.}
\subjclass[2010]{26D10, 31A15, 35J05, 42B99} 

\thanks{S.S. is supported by the NSF (DMS-1763179) and the Alfred P. Sloan Foundation.}

\author[]{Stefan Steinerberger}
\address[Stefan Steinerberger]{Department of Mathematics, Yale University, New Haven, CT 06510, USA}
\email{stefan.steinerberger@yale.edu}

\begin{abstract} Carbery proved that if $u:\mathbb{R}^n \rightarrow \mathbb{R}$ is a positive, strictly convex function satisfying $\det D^2u \geq 1$, then we have the estimate 
$$ \left| \left\{x \in \mathbb{R}^n: u(x) \leq s \right\} \right| \lesssim_n s^{n/2}$$
and this is optimal. We give a short proof that also implies other results. Our main result is an estimate for the sublevel set of functions $u:[0,1]^2 \rightarrow \mathbb{R}$ satisfying $1 \leq \Delta u \leq c$ for some universal constant $c$: for any $\alpha > 0$, we have 
$$ \left| \left\{x \in [0,1]^2 : |u(x)| \leq \varepsilon\right\}\right| \lesssim_{c} \sqrt{\varepsilon} + \varepsilon^{\alpha - \frac12} \int_{[0,1]^2}{\frac{|\nabla u|}{|u|^{\alpha}} dx}.$$
For 'typical' functions, we expect the integral to be finite for $\alpha < 1$.
While Carbery-Christ-Wright have shown that no sublevel set estimates independent of $u$ exist, this result shows that for 'typical' functions satisfying $\Delta u \geq 1$, we expect the sublevel set to be $\lesssim \varepsilon^{1/2-}$.
We do not know whether this is sharp or whether similar statements are true in higher dimensions.
\end{abstract}
\maketitle

\section{Introduction}

\subsection{Introduction} Sublevel set estimates encapsulate the notion that `if a real-valued function $u$ has a large derivative, then it cannot spend too much time near any fixed value' \cite{carbery}. If $u:\mathbb{R} \rightarrow \mathbb{R}$ satisfies $u^{(k)} \geq 1$ for some integer $k \geq 2$, then it cannot be close to any constant for a long time (since, ultimately, it has to 'curve upward'). This is formalized in the van der Corput Lemma  
$$ \left|\left\{x \in \mathbb{R}: |u(x)| \leq t\right\}\right| \lesssim_k t^{\frac1k},$$
where the implicit constant is independent of $u$: the extremal case behaves, up to constants, essentially like the monomial $u(x) = x^k/k!$ (see \cite{akc, carb2, rogers} for explicit constants). These questions are classical \cite{akc, rogers, estein} and well understood in one dimension. The problem becomes a lot harder in higher dimensions \cite{carb2, carb4, gress, phong}. The seminal paper of Carbery, Christ \& Wright \cite{carb2} shows that if $u:[0,1]^n \rightarrow \mathbb{R}$ satisfies $D^{\beta} u \geq 1$ for some multi-index $\beta$, then there is a constant $\varepsilon>0$, depending only $n$ and $\beta$, such that
$$ \left|\left\{x \in [0,1]^n: |u(x)| \leq t\right\}\right| \lesssim_{\beta, n}  t^{\varepsilon}.$$
One could perhaps assume that $\varepsilon = 1/|\beta|$ but this is far from known, the problem seems very difficult and intimately connected to problems in combinatorics, see \cite{carb2}. The problem is even open in $n=2$ dimensions for the differential inequality
$$ \frac{\partial^2 u}{\partial x \partial y} \geq 1$$
for which it is known \cite{carb2} that
$$ \left|\left\{x \in [0,1]^2: |u(x)| \leq t\right\}\right| \lesssim_{}  \sqrt{t} \sqrt{\log{(1/t)}}$$
but where it is not known whether the logarithm is necessary. The related combinatorial problems have been studied in their own right \cite{katz, katz2}.

\subsection{Carbery's Sublevel Set Estimate.}  Carbery asked whether it is possible to replace the condition $D^{\beta} u \geq 1$ by a nonlinear condition. Motivated by the corresponding theory for oscillatory integral operators, the condition $\det D^2 u \geq 1$ seems like a natural first step, however, the example $u(x,y)= xy$ shows that some further conditions are required. 
\begin{theorem}[Carbery \cite{carbery}]  Let $K$ be a convex domain, let $u:K \rightarrow \mathbb{R}$ be strictly convex and satisfy $u \geq 0$ as well as
$$ \det D^2 u \geq 1.$$
Then, for any $s > 0$,
$$ \left| \left\{x \in K: u(x) \leq s \right\} \right| \lesssim_n s^{n/2},$$
where the implicit constant depends only on the dimension.
\end{theorem}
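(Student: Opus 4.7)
The plan is to linearize the hypothesis $\det D^2 u \geq 1$ via the arithmetic-geometric mean inequality and then normalize the sublevel set $E := \{u \leq s\}$ using John's theorem. Since $u$ is convex, the eigenvalues of $D^2 u$ are nonnegative and AM-GM gives
$$\Delta u \;\geq\; n\,(\det D^2 u)^{1/n} \;\geq\; n.$$
On its own this is not enough, because integrating over $E$ only yields $n|E| \leq \int_{\partial E}|\nabla u|\,d\sigma$, which does not isolate the correct power of $s$; the fix is to first make $E$ round.

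To that end I would invoke John's theorem on the bounded convex body $E$ (bounded because $u$ is strictly convex) to produce an affine map $T$ with $B_1 \subset T^{-1}(E) \subset B_n$. On the normalized set $\tilde E := T^{-1}(E)$ the function $\tilde u(y) := u(Ty) - s$ is nonpositive, and $D^2 \tilde u(y) = T^{\top} (D^2 u)(Ty)\, T$ combined with AM-GM produces $\Delta \tilde u \geq n\,|\det T|^{2/n}$. Since $|E| = |\det T|\,|\tilde E|$ and $|\tilde E| \asymp_n 1$, the theorem reduces to showing $|\det T| \lesssim_n s^{n/2}$.

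For that, compare $\tilde u$ on $B_1 \subset \tilde E$ against the radial barrier
$$v(y) \;=\; \tfrac{1}{2}|\det T|^{2/n}(|y|^2 - 1), \qquad \Delta v \;=\; n\,|\det T|^{2/n}, \qquad v|_{\partial B_1} \equiv 0.$$
Since $\Delta(\tilde u - v) \geq 0$ in $B_1$ and $\tilde u - v \leq 0$ on $\partial B_1$, the maximum principle forces $\tilde u(0) \leq v(0) = -|\det T|^{2/n}/2$, while $u \geq 0$ forces $\tilde u(0) \geq -s$; combining the two inequalities gives $|\det T| \leq (2s)^{n/2}$. The only real choice in the argument is the affine normalization step, which couples the determinantal hypothesis to the Lebesgue volume of $E$; regularity issues (if $u$ is only strictly convex rather than $C^2$) should yield to routine mollification, since every quantity above is stable under local uniform convergence.
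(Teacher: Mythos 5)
Your argument is correct and is essentially the paper's proof: John's theorem to make the sublevel set round, the AM--GM inequality to pass from $\det D^2 u\geq 1$ to a lower bound on the Laplacian, and a radial barrier/maximum-principle comparison to extract the $s^{n/2}$ scaling. The only cosmetic difference is that the paper chooses the affine normalization to be volume-preserving and invokes its Proposition 2 for the comparison step, whereas you keep $|\det T|$ as a free parameter and carry out the barrier comparison directly; the two bookkeeping schemes are equivalent.
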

We refer to the original paper \cite{carbery} for other related statements of a similar type.
This is the optimal scaling: consider the function $u:\mathbb{R}^n \rightarrow \mathbb{R}$
$$ u(x) = a_1 x_1^2 + \dots + a_n x_n^2$$
for some positive real numbers $a_1, \dots, a_n > 0$ that satisfy
$ \det D^2 u = 2^n a_1 \dots a_n = 1.$
The function is strictly convex, the sublevel sets are ellipsoids and
$$ \left| \left\{x \in K: u(x) \leq s \right\} \right| \sim_n s^{n/2}.$$
The quantity $\det D^2 u$ is also invariant under affine transformations, something that is required for these types of statements to hold.
Carbery's proof, albeit short, is fairly nontrivial. One of our contributions is a simpler proof.

\subsection{The Laplacian.} One could also wonder whether similar results are possible for other differential operators. The Laplacian $\Delta$ is a natural starting point. This case has been analyzed by Carbery, Christ \& Wright \cite{carb2} who proved that such statements must necessarily fail. More precisely, they show 

\begin{proposition}[Proposition 5.2., Carbery-Christ-Wright \cite{carb2}] For $0<\delta < 1/2$, there exists $u_{} \in C^{\infty}([0,1]^2)$ such that $\Delta u \equiv 1$ on $(0,1)^2$ and
$$ \left| \left\{x \in [0,1]^2: |u(x)| \geq \delta \right\} \right| \leq \delta.$$
\end{proposition}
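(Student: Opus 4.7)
My plan is to use the classical \emph{champagne subdomain} construction. Remove a dense array of tiny disks $\{B(x_i, r)\}$ from $[0,1]^2$ and solve Poisson's equation on the perforated region $\Omega = [0,1]^2 \setminus \bigcup_i \overline{B(x_i, r)}$ with zero Dirichlet data on $\partial \Omega$. The probabilistic representation
\[
u(x) \;=\; -\tfrac{1}{2}\,\mathbb{E}_x[\tau_\Omega],
\]
where $\tau_\Omega$ is the first exit time of planar Brownian motion from $\Omega$, reduces the sublevel-set bound to a quantitative upper bound on $\mathbb{E}_x[\tau_\Omega]$. I would then extend $u$ inside each removed disk by the explicit radial profile $(|x - x_i|^2 - r^2)/4$, which satisfies $\Delta u = 1$ in $B(x_i, r)$ and matches the zero boundary value on $\partial B(x_i, r)$.

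\textbf{Key estimate.} Place the centers $\{x_i\}$ on the $\varepsilon$-lattice inside $[0,1]^2$ and take $r$ much smaller than $\varepsilon$, say $r = e^{-1/\varepsilon}$. Then the $N \asymp \varepsilon^{-2}$ disks have combined area $N\pi r^2 \ll \delta$ for $\varepsilon$ small. A standard Wiener-capacity bound for a periodic array of tiny absorbers yields
\[
\sup_{x \in \Omega} \mathbb{E}_x[\tau_\Omega] \;\lesssim\; \frac{\varepsilon^2}{\log(\varepsilon/r)},
\]
which becomes arbitrarily small. In particular $|u| < \delta/2$ on $\Omega$, while inside each disk $|u| \le r^2/4 < \delta/2$, so $|u| < \delta$ everywhere on $[0,1]^2$ and the bad set $\{|u| \ge \delta\}$ is actually empty, a fortiori of measure $\le \delta$.

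\textbf{Main obstacle: smoothness.} The construction above is only Lipschitz: the normal derivative of the interior quadratic equals $r/2$ on $\partial B(x_i, r)$, whereas the normal derivative of the exterior solution is determined by the global Dirichlet problem and generically differs, so $\Delta u$ acquires a singular surface measure on $\bigcup_i \partial B(x_i, r)$. Upgrading to a genuine $C^\infty([0,1]^2)$ function while retaining $\Delta u \equiv 1$ is the main technical step. My proposal is to soften the obstacles: solve instead the Schr\"odinger-type equation $-\Delta u + V_r\,u = -1$ on $[0,1]^2$ with zero Dirichlet data on $\partial[0,1]^2$, where $V_r \ge 0$ is a smooth radial bump of height $\sim r^{-2}$ supported in $B(x_i, 2r)$. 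The Feynman--Kac formula gives the same sublevel bound via a killed-Brownian-motion expectation, and elliptic regularity makes $u$ smooth. One then adds a compactly supported smooth correction that exactly cancels the spurious $V_r u$ term: since $|u|$ is already of order $\varepsilon^2/|\log r|$ inside each bump region, the correction is quantitatively tiny and can be arranged to satisfy $\|w\|_\infty < \delta/2$. Verifying the quantitative compatibility of the scales $(\varepsilon, r, \delta)$ so that every piece remains $< \delta$ is the delicate bookkeeping, but the conceptual content is entirely from classical potential theory.
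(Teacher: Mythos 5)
There is a genuine gap, and it is not the ``delicate bookkeeping'' you defer to at the end --- it is a structural obstruction that makes the whole champagne route collapse. First note that your intermediate conclusion (``$|u|<\delta$ everywhere, so the bad set is empty'') is impossible for any function with $\Delta u\equiv 1$: by Proposition 2 of this paper, on the inscribed ball of radius $1/2$ one has $\max u-\min u\geq 1/16$, and by Theorem 3 any example with $|\{|u|\geq\delta\}|\leq\delta$ must in fact satisfy $\|u\|_{L^\infty}\gtrsim 1/\delta$. So a correct construction must make $u$ \emph{huge} on the small exceptional set; a construction in which $u$ is uniformly tiny cannot be a small perturbation of anything admissible. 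The reason your glued function evades Proposition 2 is that its distributional Laplacian is not $1$: the crease on each $\partial B(x_i,r)$ contributes a negative singular measure, and integrating by parts shows this measure carries total mass $\approx 1$ (essentially all of $\int_{[0,1]^2}\Delta u$ is hidden there). That defect is of unit size, not a technicality about smoothness.

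The proposed smooth repair fails for exactly this quantitative reason. Integrating $-\Delta u+V_r u=-1$ over the square gives $\int V_r|u|\,dx=1-\int_{\partial[0,1]^2}\partial_n u\,d\mathcal H^1$, which is of order one; hence no compactly supported $w$ with $\Delta w=-V_r u$ exists at all (a compactly supported $w$ has $\int\Delta w=0$), and if you instead take $w$ with zero boundary data, uniqueness of the Dirichlet problem forces $u+w$ to be \emph{exactly} the torsion function of the square ($\Delta v=1$, $v=0$ on the boundary), which is of size $\sim 10^{-1}$ on a set of measure $\sim 1$ --- the entire champagne structure cancels out. So the ``spurious term'' is never quantitatively tiny, and its cancellation costs an $O(1)$ correction, not one of size $<\delta/2$. (A minor further point: the exit-time bound for a periodic array of absorbers of radius $r$ at spacing $\varepsilon$ is $\sup_x\mathbb E_x\tau\lesssim\varepsilon^2\log(\varepsilon/r)$, with the logarithm multiplying rather than dividing; with $r=e^{-1/\varepsilon}$ this still tends to $0$, so it is not the main issue.) For comparison, the paper does not prove this proposition at all --- it quotes Carbery--Christ--Wright, whose construction is of a genuinely different nature: writing $u=\tfrac14|x|^2+h$, they use Mergelyan/Runge-type approximation to produce a harmonic $h$ that is uniformly close to $-\tfrac14|x|^2$ off a thin ``comb'' of small area whose complement is connected; the function is then automatically real-analytic with $\Delta u\equiv1$, small off the comb, and necessarily very large on it. Any successful argument must produce that large-on-a-thin-set behaviour, which your construction never does.
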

 The construction uses the Mergelyan theorem and is thus intimately connected to two dimensions. The main contribution of our paper is to show that such constructions are 'rare', in a certain sense, since $|\nabla u|$ has to rather large in regions where $|u|$ is small. We also prove the existence of a constant $c_n$ such that $ \left| \left\{x \in [0,1]^n: |u(x)| \geq c_n \right\} \right| \|u\|_{L^{\infty}} \geq c_n.$

\section{Main Results}
\subsection{Revisiting Carbery's estimate.}
A natural question is whether it is possible to weaken the assumptions in Carbery's estimate (this was also discussed by Carbery-Maz'ya-Mitrea-Rule \cite{carb3} by very different means). We note that, with the inequality of arithmetic and geometric mean, we obtain
$$  1 \leq \det D^2 u = \prod_{i=1}^{n}{\lambda_i(D^2 u)} \leq \frac{1}{n^n} \left( \sum_{i=1}^{n}{ \lambda_i(D^2 u)} \right)^n = \frac{ (\Delta u)^n}{n^n},$$
where we used strict convexity (positivity of the eigenvalues of the Hessian) to invoke the inequality.
One could thus wonder whether the weaker condition $\Delta u \geq 1$ by itself is sufficient and it is fairly easy to see that this is not the case: consider $u(x) = x_1^2 + \varepsilon x_2^2$, then the set
$\left\{x: u(x) \leq 1\right\}$ can be arbitrarily large if we make $\varepsilon$ sufficiently small. We also see that the shape of this sublevel set is rather eccentric and, as it turns out, this is necessary. For spherical sublevel sets, we can indeed
establish the desired result under weaker conditions.

\begin{proposition} Suppose $u:\left\{x \in \mathbb{R}^n: \|x\| \leq r\right\} \rightarrow \mathbb{R}$ satisfies $\Delta u \geq 1$, then
$$\max_{\|x\| \leq r}u - \min_{\|x\| \leq r}u \geq \frac{r^2}{2n}.$$
\end{proposition}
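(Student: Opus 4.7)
The natural approach is a comparison with the explicit radial solution of $\Delta v = 1$.

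The plan is to introduce the auxiliary function
\[
w(x) = u(x) - \frac{\|x\|^2}{2n},
\]
which satisfies $\Delta w = \Delta u - 1 \geq 0$, so $w$ is subharmonic on the ball $B_r = \{\|x\| \leq r\}$. By the sub-mean-value inequality for subharmonic functions,
\[
w(0) \;\leq\; \frac{1}{|\partial B_r|}\int_{\partial B_r} w(y)\, d\sigma(y).
\]
Unpacking the definition of $w$, the $\|y\|^2/(2n)$ term is constant on $\partial B_r$ and equals $r^2/(2n)$, so this rearranges to
\[
u(0) + \frac{r^2}{2n} \;\leq\; \frac{1}{|\partial B_r|}\int_{\partial B_r} u(y)\, d\sigma(y) \;\leq\; \max_{\|y\| \leq r} u.
\]

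Since $\min_{\|x\|\leq r} u \leq u(0)$, we conclude $\max u - \min u \geq \max u - u(0) \geq r^2/(2n)$, which is the claim. I expect no real obstacle here: the only ingredient beyond the comparison trick is the submean-value inequality, which is standard, and one should check that the hypotheses on $u$ (implicit regularity, say $u \in C^2$ so that $\Delta u$ makes pointwise sense) are strong enough to apply it — this is immediate since $w$ is continuous on the closed ball and $C^2$ in the interior. A one-line alternative using the divergence theorem ($\int_{B_r}\Delta u \geq |B_r|$ combined with an integration by parts against the Green's function of $B_r$) would give the same inequality, but the subharmonicity argument is the shortest.
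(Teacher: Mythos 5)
Your proof is correct and rests on the same core idea as the paper's first proof: comparison with the explicit quadratic $\|x\|^2/(2n)$ via the maximum principle. Your formulation is marginally cleaner — subtracting the quadratic and invoking the sub-mean-value inequality at the origin sidesteps the paper's normalization $u \geq 0$ and the explicit computation of the comparison function's constant — but the two arguments are essentially interchangeable.
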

This result is a consequence of the maximum principle (and, as such, has presumably been used many times in the literature).
It implies a short proof of Theorem 1: consider the domain $$\Omega = \left\{x \in K: |u(x)| \leq s \right\}.$$ Since $\Omega$ is convex, by John's ellipsoid theorem it contains an ellipsoid $E \subset \Omega$ such that $|E| \sim_n |\Omega|$. Let us apply the diagonal volume-preserving affine transformation that maps $E$ to a ball. Since the condition $\det D^2 u \geq 1$ is affinely invariant, it is preserved. Then the arithmetic-geometric inequality implies $\Delta u \gtrsim_n 1$ on the ball and we can apply Proposition 2: since $u \geq 0$, we obtain 
$$ \| u\|_{L^{\infty}(K)} \gtrsim |\Omega|^{\frac{2}{n}}$$
 which is the desired result.
 We will give two proofs of Proposition 2: one simple and using only the maximum principle and one that is slightly more complicated that will set the stage for the arguments that play a role later in the paper.

 \subsection{Another sublevel set estimate.} We return to the problem of understanding functions $u:[0,1]^2 \rightarrow \mathbb{R}$ that satisfy $1 \leq \Delta u \leq c$ for some fixed constant $c$. As was shown by Carbery-Christ-Wright (see \S 1.3.), no classical sublevel set estimates (i.e. depending only on $\varepsilon$ but not on the function $u$) are possible. One could then wonder about sublevel set estimates that somehow depend on $u$. The Markov inequality can be written as
 $$ \left| \left\{x \in [0,1]^2: |u(x)| \leq \varepsilon \right\}\right| \leq \varepsilon^{\alpha} \int_{[0,1]^2}{ \frac{1}{|u(x)|^{\alpha}} dx}.$$
 Needless to say, this estimate is not exactly of great interest. It is obviously true for all functions and not just those that satisfy $1 \leq \Delta u \leq c$. 
The main result of this paper is a slightly different inequality. 
\begin{theorem} Assume  $u:[0,1]^2 \rightarrow \mathbb{R}$ satisfies $1 \leq \Delta u \leq c$. Then, for all $\alpha > 0$,
 $$ \left| \left\{x \in [0,1]^2: |u(x)| \leq  \varepsilon \right\}\right| \lesssim_c \sqrt{\varepsilon} + (2\varepsilon)^{\alpha - \frac12} \int_{[0,1]^2}{ \frac{|\nabla u|}{|u|^{\alpha}} dx},$$
 where the implicit constant depends only on $c$.
\end{theorem}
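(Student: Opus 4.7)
The plan is to reduce matters to an estimate on the lengths of level sets of $u$, by combining Proposition 2 with the intermediate value theorem, and then to invoke the coarea formula. Fix $\varepsilon$ small and set $r = 2\sqrt{3\varepsilon}$. For any $x_0 \in S_\varepsilon := \{|u| \leq \varepsilon\}$ at distance at least $r$ from $\partial[0,1]^2$, Proposition 2 applied to $B(x_0, r)$ produces some $y \in B(x_0, r)$ with $u(y) \geq u(x_0) + r^2/4 \geq 2\varepsilon$. Applying the intermediate value theorem to $u$ along the segment $[x_0, y]$ then shows that, for every $t \in [\varepsilon, 2\varepsilon]$, this segment meets $\{u = t\}$, and so $\mathrm{dist}(x_0, \{u = t\}) \leq r$. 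The points of $S_\varepsilon$ within distance $r$ of $\partial[0,1]^2$ occupy an area of $O(\sqrt{\varepsilon})$, accounting for the first term of the final bound.

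Writing $L(t) := \mathcal{H}^1(\{u = t\})$ and invoking a tube-area estimate for the rectifiable level set $\{u = t\}$ gives $|\{x : \mathrm{dist}(x, \{u = t\}) \leq r\}| \lesssim r L(t) + O(r^2)$; for each $t \in [\varepsilon, 2\varepsilon]$ this yields $|S_\varepsilon| \lesssim \sqrt{\varepsilon} + \sqrt{\varepsilon}\, L(t)$. Averaging over $t \in [\varepsilon, 2\varepsilon]$ and invoking the coarea formula,
$$|S_\varepsilon| \lesssim \sqrt{\varepsilon} + \varepsilon^{-1/2} \int_\varepsilon^{2\varepsilon} L(t)\,dt = \sqrt{\varepsilon} + \varepsilon^{-1/2} \int_{\{\varepsilon \leq u \leq 2\varepsilon\}} |\nabla u|\,dx.$$
Since $|u| \leq 2\varepsilon$ throughout the integration region, inserting $|u|^{-\alpha}$ costs at most a factor $(2\varepsilon)^\alpha$, so the right-hand side is at most a constant times $\sqrt{\varepsilon} + (2\varepsilon)^{\alpha - 1/2} \int_{[0,1]^2} |\nabla u|/|u|^\alpha\,dx$, which is the stated bound.

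The main obstacle I foresee lies in the tube estimate of the middle step: the naive bound for the $r$-neighborhood of a rectifiable set in the plane carries an $O(r^2 N)$ error term, where $N$ counts the connected components of $\{u = t\}$, and a priori $N$ can be large. The upper bound $\Delta u \leq c$ should enter precisely here, which is where the implicit constant will pick up its $c$-dependence. A closed component of $\{u = t\}$ bounds a region in which $u < t$ by subharmonicity and the maximum principle, and one may hope to combine the two-sided bound $1 \leq \Delta u \leq c$ with the divergence theorem and the isoperimetric inequality to produce a lower bound on the area of each such region, thereby limiting $N$. Absorbing this error into the $\sqrt{\varepsilon}$ term is the technical crux of the argument.
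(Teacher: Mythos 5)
Your opening moves — Proposition 2/3 to force a level crossing within distance $r \sim \sqrt{\varepsilon}$, then the coarea formula over $t \in [\varepsilon, 2\varepsilon]$ — are close in spirit to Step 1 of the paper, but from there the arguments diverge, and the gap you flag in the middle is genuine and not filled by the repair you sketch. The tube estimate $\left|\{x : \mathrm{dist}(x, \{u=t\}) \leq r\}\right| \lesssim rL(t) + O(r^2)$ is false in general: the error is $O(Nr^2)$ where $N$ is the number of components, and if $\{u=t\}$ consists of many loops of length $\ll r$, then $Nr^2 \gg rL(t)$. Your proposed fix — use $1 \leq \Delta u \leq c$ plus the divergence theorem and isoperimetry to lower-bound the area enclosed by each small loop, hence bound $N$ — does not obviously close this: integrating $\Delta u \geq 1$ over the enclosed region $R_i$ gives $\int_{\partial R_i} \partial_\nu u \geq |R_i|$, which is an \emph{upper} bound on $|R_i|$ in terms of $\int |\nabla u|$ on the boundary (a quantity you do not control), not a lower bound on $|R_i|$; and even a bound $N \lesssim 1/\varepsilon$ would only give $Nr^2 \lesssim 1$, which is useless.

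The paper's argument resolves precisely this difficulty, but by a different mechanism than counting components. After selecting good levels $t_1, t_2$ by pigeonhole, it fixes a connected component $A$ of the relevant sublevel set and splits the components $C_i$ of $\{u \leq t_1\} \cap A$ into \emph{small} ($\mathcal{H}^1(\partial C_i) < \sqrt{\varepsilon}$) and \emph{large} ($\geq \sqrt{\varepsilon}$). The hypothesis $\Delta u \leq c$ is used exactly once, in Step 2, to show via a Brownian exit-time bound that on a \emph{small} component $u$ dips below $t_1$ by only $O(c\,\varepsilon)$; this lets one absorb the small components into a domain $\Omega = A \setminus \bigcup\{C_i : \text{large}\}$ on which $u$ has oscillation $O(c\,\varepsilon)$. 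A second exit-time bound (Step 3) then says expected Brownian exit time from $\Omega$ is $O(c\,\varepsilon)$, and a heat-content estimate (Step 4) yields $|\Omega| \lesssim \sqrt{\varepsilon}\,|\partial\Omega|$. Crucially, the per-component $O(\varepsilon)$ error in that heat-content lemma is absorbed by $\sqrt{\varepsilon}\,\mathcal{H}^1(\partial C_j)$ \emph{because only large components appear in $\partial\Omega$} — that is, $\varepsilon \leq \sqrt{\varepsilon}\,\mathcal{H}^1(\partial C_j)$ by construction. This small/large filtering is what your tube approach lacks: it treats all of $\{u=t\}$ uniformly and hence cannot discard the tiny loops whose tubes swamp the estimate. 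If you wanted to rescue your route you would need to exclude small components of $\{u=t\}$ from the tube, argue separately (using $\Delta u \leq c$) that the portion of $S_\varepsilon$ near small components has controlled area, and verify the inclusion $S_\varepsilon \subset \text{tube}$ survives the exclusion — which is essentially re-deriving the paper's decomposition by other means.
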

We strongly emphasize that it is not clear to us to which extent this result is best possible or whether possibly sharper estimates might exist. Another interesting problem is 
to understand whether the result does indeed depend on $c$. If so, then there might be a connection to so-called champagne domains from potential theory, we discuss the connection after the proof.\\

It is interesting that this curious estimate gives the sharp (up to the endpoint in $\alpha$) results for several different types of functions.
We consider the simple example $u(x_1, x_2) = x_1^2 + x_2^{2}$ where
$$ \left| \left\{x \in [0,1]^2: |u(x)| \leq \varepsilon \right\}\right| \lesssim \varepsilon^{}.$$
We have $|\nabla u| \lesssim \max(|x_1|, |x_2|^{})$ and obtain
$$  \int_{[0,1]^2}{ \frac{|\nabla u|}{|u|^{\alpha}} dx} \lesssim  \int_{[0,1]^2}{ \frac{ \max(|x_1|, |x_2|^{})}{\max(|x_1|^{2\alpha}, |x_2|^{2\alpha})|^{}} dx}
\lesssim \int_{[0,1]^2}{ \frac{1}{|x_1|^{2\alpha -1}} dx_1}
$$
which is finite up to $\alpha < 3/2$.
 We also emphasize that Theorem 2, when considering analytic functions, seems to connect to a type of inverse \L{}ojasiewicz inequality \cite{lo1, lo2, lo3, lo4, lo5, lo6}. We recall the inequality: if $f:\mathbb{R}^n \rightarrow \mathbb{R}$ is analytic in a neighborhood of the origin and $f(0) = 0$ and $\nabla f(0) = 0$, then there is an open neighborhood around the origin as well as two constants $c> 0$ and $\rho < 1$ such that
$$ \left|\nabla f(x)\right| \geq c \left|f(x)\right|^{\rho}.$$
Explicit estimates on $\rho$ are available when $f$ is a polynomial (in terms of $n$ and the degree, see \cite{lo2} and references therein). This interesting connection suggests the possibility of applications of Theorem 2 to polynomials or analytic functions.\\

Our proof is strictly two-dimensional (exploiting several geometric arguments that fail in higher dimensions). It is not clear to us whether and to which extent similar results could hold in higher dimensions, i.e. for $u:[0,1]^n \rightarrow \mathbb{R}$ satisfying $1 \leq \Delta u \leq c$. We believe that this could be quite interesting.

\subsection{A flatness estimate.} Let us again assume $\Delta u \geq 1$ on $[0,1]^n$. As discussed in \S 1.3. above, it is possible that
$$ \left| \left\{x \in [0,1]^n: |u(x)| \geq \varepsilon \right\}\right| \leq \varepsilon$$
for arbitrarily small values of $\varepsilon$. How does such a function look like? Applying Proposition 2 immediately shows that the set $\left\{x \in [0,1]^n: |u(x)| \leq \varepsilon \right\}$ cannot contain a ball of radius $2\sqrt{n} \sqrt{\varepsilon}$. At the same time, the condition $\Delta u \geq 1$ implies that there are no local maxima inside, therefore every connected component of
$$ \left\{x \in [0,1]^n: u(x) \geq \varepsilon\right\} \qquad \mbox{must necessarily touch the boundary.}$$
Figure 1 shows how such a function could possibly look like.
\begin{center}
\begin{figure}[h!]
\begin{tikzpicture}[scale=6]
\draw [thick] (0,0) -- (1,0) -- (1,1) -- (0,1) -- (0,0);
\draw [thick]  (0.7, 0.55) to[out=0, in=200] (1,0.8);
\draw [thick] (0.7, 0.55) to[out=20, in=180] (1,0.85);
\draw [thick]  (0.4, 0.3) to[out=340, in=180] (1,0.1);
\draw [thick] (0.4, 0.3) to[out=0, in=190] (1,0.15);
\draw [thick]  (0.5, 0.5) to[out=180, in=0] (0,0.55);
\draw [thick] (0.5, 0.5) to[out=180, in=0] (0,0.6);
\draw [thick] (0.3, 0.6) to[out=90, in=270] (0.3,1);
\draw [thick] (0.3, 0.6) to[out=95, in=270] (0.27,1);
\draw [thick] (0.5, 0.52) to[out=90, in=270] (0.5,1);
\draw [thick] (0.5, 0.52) to[out=95, in=270] (0.47,1);
\draw [thick] (0.2, 0.2) to[out=270, in=45] (0,0);
\draw [thick] (0.2, 0.2) to[out=280, in=45] (0,0.01);
\draw [dashed] (0.7, 0.4) circle (0.2cm);
\end{tikzpicture}
\caption{A sketch of what $\left\{x: u(x) \geq \varepsilon \right\}$ could look like: they connect to the boundary and they intersect every $\sim \sqrt{\varepsilon}-$ball.}
\end{figure}
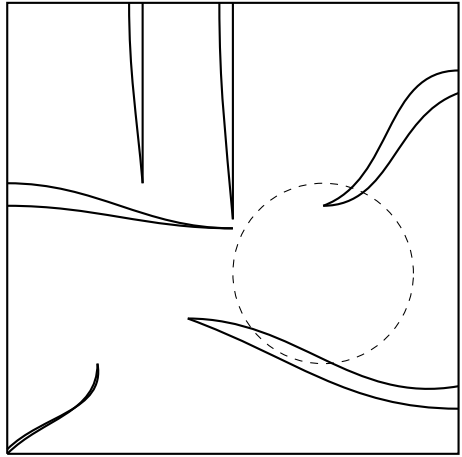
\end{center}
The Carbery-Christ-Wright construction shows that the set $\left\{x: u(x) \geq \varepsilon \right\}$ can indeed be arbitrarily small: we were interested in whether this required the function to be large in some places and this motivated our result: if solutions $\Delta u \geq 1$ are flat on a very large subset, then $u$ must be very large on the complement.

\begin{theorem} There exists a constant $c_n > 0$ depending only on the dimension such that if $u:[0,1]^n \rightarrow \mathbb{R}$ satisfies $\Delta u \geq 1$, then
$$ \left|\left\{x \in [0,1]^n: |u(x)| \geq c_n \right\} \right| \cdot \| u\|_{L^{\infty}([0,1]^n)}^{} \geq c_n.$$
\end{theorem}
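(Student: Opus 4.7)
The plan is to deduce the statement from two matching estimates on $\|u\|_{L^1([0,1]^n)}$. From below, I will use a Green-type integration by parts against a fixed test function to produce a universal positive lower bound on this $L^1$ norm. From above, I will use the trivial decomposition into sub- and super-level sets to bound the $L^1$ norm in terms of $c_n$, $\|u\|_{L^\infty}$, and $|\{|u|\geq c_n\}|$. Matching the two forces the product $\|u\|_{L^\infty}\cdot|\{|u|\geq c_n\}|$ to be at least a dimensional constant, which pinpoints the admissible $c_n$.

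For the lower bound, fix once and for all a nonnegative bump $\varphi\in C_c^\infty((0,1)^n)$ with $\int\varphi>0$. Since $\varphi$ and $\nabla\varphi$ vanish on $\partial[0,1]^n$, Green's second identity gives
$$\int_{[0,1]^n} u\,\Delta\varphi\,dx \;=\; \int_{[0,1]^n}\varphi\,\Delta u\,dx \;\geq\; \int_{[0,1]^n} \varphi\,dx,$$
where the inequality uses $\Delta u\geq 1$ and $\varphi\geq 0$. The left-hand side is bounded above by $\|\Delta\varphi\|_{L^\infty}\,\|u\|_{L^1}$, so
$$\|u\|_{L^1([0,1]^n)}\;\geq\;\alpha_n\;:=\;\frac{\int\varphi\,dx}{\|\Delta\varphi\|_{L^\infty}},$$
a strictly positive constant depending only on $n$; any explicit smooth compactly supported bump suffices.

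For the upper bound, split the cube at the threshold $c_n$:
$$\|u\|_{L^1}\;\leq\;\int_{\{|u|<c_n\}}|u|\,dx+\int_{\{|u|\geq c_n\}}|u|\,dx\;\leq\;c_n+\|u\|_{L^\infty}\cdot\bigl|\{x\in[0,1]^n:|u(x)|\geq c_n\}\bigr|,$$
where I use that $[0,1]^n$ has unit volume. Setting $c_n:=\alpha_n/2$ and combining with the lower bound yields $\|u\|_{L^\infty}\cdot|\{|u|\geq c_n\}|\geq \alpha_n-c_n=c_n$, which is the claim. I do not expect a real obstacle: the only regularity needed on $u$ is enough to justify the integration by parts, and the only genuine choice is the test function $\varphi$, which is fixed dimensionally. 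Note that this argument is manifestly valid in every dimension and does not use any of the two-dimensional geometric ingredients behind Theorem 2.
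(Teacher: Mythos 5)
Your argument is correct, and it is genuinely different from --- and considerably more elementary than --- the proof in the paper. The paper argues by contradiction: it passes to a ball, uses Fubini and pigeonholing to find a sphere $\{\|x\|=t\}$, $0.99<t<1$, on which the set $\{|u|\geq\varepsilon\}$ has small $(n-1)$-measure, and then invokes the Feynman--Kac representation of $u$ (harmonic measure on that sphere plus the expected exit time of Brownian motion from the ball, which is $\gtrsim_n 1$) to show that $u$ cannot be simultaneously small near the center and small on the chosen sphere slice. You instead test the inequality $\Delta u\geq 1$ against a fixed bump $\varphi\in C_c^\infty((0,1)^n)$: Green's identity is legitimate because $\varphi$ and $\nabla\varphi$ vanish near $\partial[0,1]^n$ (and $u$ is assumed regular enough for the statement to make sense), giving the quantitative bound $\|u\|_{L^1([0,1]^n)}\geq \alpha_n=\int\varphi/\|\Delta\varphi\|_{L^\infty}$; the splitting at the threshold $c_n=\alpha_n/2$ then finishes, the degenerate case $\{|u|\geq c_n\}=\emptyset$ being impossible since it would force $\|u\|_{L^1}\leq c_n<\alpha_n$. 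What each approach buys: yours yields an explicit dimensional constant, proves the strictly stronger statement $\|u\|_{L^1([0,1]^n)}\gtrsim_n 1$ (from which Theorem 3 follows, and which via H\"older on $\{|u|\geq c_n\}$ also gives variants such as $|\{|u|\geq c_n\}|^{1-1/p}\,\|u\|_{L^p}\geq c_n$ for $p<\infty$, relevant to the question posed right after Theorem 3), and is purely deterministic. The paper's route is of a piece with the heat-flow/exit-time machinery used for Proposition 2 and Theorem 2, and it localizes where the largeness of $u$ must appear (on a boundary sphere chosen by pigeonholing), information the averaged test-function argument does not provide; it is also the kind of argument that survives when $\Delta u\geq 1$ holds only in a suitably averaged sense.
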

It is not at all clear how the extremal examples behave, whether Theorem 3 has the optimal scaling. Is there an estimate like $$ \left|\left\{x \in [0,1]^n: |u(x)| \geq c_n \right\} \right| \cdot \| u\|_{L^{p}([0,1]^n)}^{\alpha} \geq c_n$$ for some $\alpha < 1$ or $p < \infty$ or both?

\section{Proofs of Proposition 2}

\subsection{First Proof of Proposition 2}
\begin{proof} The estimate is invariant under addition of constants. We can thus, by adding $-\min_{\|x\| \leq r} u$, assume that
$u \geq 0$ and it suffices to show that
$$\max_{\|x\| \leq r} u \geq \frac{r^2}{2n}.$$

We define the function $w$ as the solution of
\begin{align*}
 \Delta w &= 1 \qquad \quad \,\,\, \mbox{for}~\|x\| < r\\
w &= \max_{\|x\| = r}u  \quad \mbox{for}~\|x\| = r.
\end{align*}
We see that $w - u$ is positive on the boundary and that
$$\Delta(w - u) = \Delta w - \Delta u \leq 0.$$
This implies that the minimum of $w - u$ is assumed on the boundary, where the function is nonnegative. Therefore $w \geq u \geq 0$. However, we can actually compute $w$ in closed form: the solution is radial and the radial Laplacian can be written as
$$ \Delta_s = \frac{1}{s^{n-1}} \frac{\partial}{\partial s}\left( s^{n-1} \frac{\partial f}{\partial s} \right)$$
which shows that the solution is given by
$$ w(s) = \frac{s^2}{2n} + c,$$
where $c$ is a constant chosen such that the boundary conditions are satisfied. However, $w \geq 0$ and thus $c \geq 0$. This implies that
$$  \max_{\|x\| = r}u = w(r) \geq \frac{r^2}{2n}.$$
It is clear that the estimate is sharp since
$$ \Delta\left( \frac{\|x\|^2}{2n}\right) = \Delta \left( \frac{x_1^2 + \dots + x_n^2}{2n}\right) = 1.$$
\end{proof}

\subsection{Second Proof of Proposition 2}
\begin{proof} Our second proof of Proposition 2 is based on representing the function $u$ as the stationary solution of the heat equation. By itself, this argument is more difficult than the one based on the maximum principle but it introduces a line of thought that will be useful for a later proof (indeed, this type of argument has proven useful for several different problems \cite{biswas, lierl, manas, stein, stein2}). We study 
\begin{align*}
\frac{\partial v}{\partial t} &= \Delta v - \Delta u \qquad \,\,\,\mbox{in}~\Omega \\
v(0,x) &= u(x)  \qquad \mbox{in}~\Omega \\
v(t,x) &= u(x) \qquad \mbox{on}~ \partial \Omega.
\end{align*}
The Feynman-Kac formula then implies a representation of the function $u(x) = v(t,x)$ as a weighted average of its values in a neighborhood to which standard estimates can be applied. We denote a
Brownian motion started in $x \in \Omega$ at time $t$ by $\omega_x(t)$. The Dirichlet boundary conditions require us to demand that
the boundary is 'sticky' and that a particle remains at the boundary once it touches it. The Feynman-Kac formula implies that for all $t > 0$
$$ u(x) = \mathbb{E} u(\omega_x(t)) - \mathbb{E} \int_{0}^{t \wedge \tau}{ (\Delta u)(\omega_x(t)) dt},$$
where $\tau_x$ is the stopping time for impact on the boundary for Brownian motion started in $x$. A simple way to derive the scaling for Proposition 2 is now as follows: suppose 
we are on a ball of radius $r$.
We have, for all $\|x\| < r$ and all $t > 0$,
$$ \left| u(x) - \mathbb{E} u(\omega_x(t)) \right| \leq \max_{\|x\| \leq r}u - \min_{\|x\| \leq r}u.$$
The expected lifetime of Brownian motion in a ball of radius $r$ when started near the center until hitting the boundary is $\sim r^2$. Since $\Delta u \geq 1$ this implies
$$ \mathbb{E} \int_{0}^{t \wedge \tau}{ (\Delta u)(\omega_x(t)) dt} \gtrsim r^2$$
and this establishes the desired result.
\end{proof}
One obvious advantage of this kind of approach is that $\Delta u \geq 1$ is clearly not strictly required as long as it is true 'in the aggregate'. Indeed, if we have $\Delta u \geq \phi(x)$, then approaches of this flavor could be used to deduce analogous bounds as long as $'\phi \geq 1'$ in a suitable averaged sense.

\section{Proof of Theorem 2}
\subsection{Introduction.} We start with a quick variation on Proposition 2 that will prove useful in the proof of Theorem 2 where we require a pointwise statement.
\begin{proposition} Suppose $u:\left\{x \in \mathbb{R}^n: \|x\| \leq r\right\} \rightarrow \mathbb{R}$ satisfies $\Delta u \geq 1$, then, for all $\|y\| < r$
$$\max_{\|x\| = r}u(x)  \geq \frac{r^2 - \|y\|^2}{2n} + u(y).$$
\end{proposition}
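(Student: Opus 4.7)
The plan is to mimic the first proof of Proposition 2 but with a carefully chosen comparison function so that the inequality specializes to each point $y$, not just to the supremum of $u$ on the ball. The key observation is that the previous proof essentially compared $u$ to the explicit radial function $s \mapsto s^2/(2n) + c$; here we need to track the value at a general interior point instead of using only boundary data.

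Concretely, let $B = \{x : \|x\| \leq r\}$ and set $M = \max_{\|x\|=r} u(x)$. I would define the comparison function $w$ on $B$ as the radial solution of
\begin{align*}
\Delta w &= 1 \quad \text{in } \{\|x\|<r\},\\
w &= M \quad \text{on } \{\|x\|=r\},
\end{align*}
which, exactly as in the first proof of Proposition 2, is given explicitly by
$$ w(x) = \frac{\|x\|^2 - r^2}{2n} + M. $$

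Then $\Delta(w-u) = 1 - \Delta u \leq 0$, so $w - u$ is superharmonic in the open ball. On the boundary, $w - u = M - u \geq 0$ by the definition of $M$. Hence, by the minimum principle for superharmonic functions, $w - u \geq 0$ throughout $B$. Evaluating this at the given point $y$ with $\|y\| < r$ yields
$$ u(y) \leq w(y) = \frac{\|y\|^2 - r^2}{2n} + M, $$
which, after rearrangement, is precisely the claimed inequality $\max_{\|x\|=r} u(x) \geq (r^2 - \|y\|^2)/(2n) + u(y)$.

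There is no real obstacle: the argument is just the first proof of Proposition 2 with the constant of integration kept explicit so the inequality reads as a pointwise statement. The only detail worth double-checking is the sign convention (that $\Delta w = 1$ indeed makes $w$ subharmonic with maximum on the boundary, consistent with the formula above), and the fact that $M$ can be taken as the maximum on the sphere $\{\|x\|=r\}$ rather than the supremum over the closed ball — this is automatic since $u$ is subharmonic (as $\Delta u \geq 1 > 0$), so its maximum on $B$ is attained on $\partial B$.
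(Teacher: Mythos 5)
Your proof is correct and is essentially identical to the paper's argument: both compare $u$ with the explicit radial solution $w$ of $\Delta w = 1$ with boundary value $M = \max_{\|x\|=r}u$, deduce $w \ge u$ from the maximum principle applied to the superharmonic function $w-u$, and then evaluate at $y$ to obtain the pointwise bound. The only cosmetic difference is that you write $w$ with the constant of integration solved out, $w(x) = (\|x\|^2-r^2)/(2n)+M$, while the paper keeps the constant $c$ and lets it cancel.
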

\begin{proof} The proof is almost completely analogous. 
We define the function $w$ as the solution of
\begin{align*}
 \Delta w &= 1 \qquad  \quad \,\, \mbox{for}~\,~\|x\| < r\\
w &= \max_{\|x\| = r}u  \quad \mbox{for}~\|x\| = r
\end{align*}
and observe that, as before, $w \geq u$. Moreover, we know that
$$ w(r) = \frac{r^2}{2n} + c$$
for some constant $c \geq 0$ chosen so that the boundary conditions are satisfied. The argument is finished by observing that
\begin{align*}
 \frac{r^2}{2n} - \frac{\|y\|^2}{2n} &= \left(  \frac{r^2}{2n} +c \right) - \left(\frac{\|y\|^2}{2n} + c \right)\\
 &=   \max_{\|x\|=r}u(x) - w(y)  \leq \max_{\|x\|=r}u(x) - u(y).
 \end{align*}
\end{proof}
We will use this statement to conclude, for all $y$ and all $r>0$,
$$ \max_{\|x-y\|=r} u(x) - u(y) \gtrsim_n r^2.$$
The proof of Theorem 2 uses the coarea formula which we recall for the convience of the reader. If $\Omega \subset \mathbb{R}^n$ is an open set, $u:\Omega \rightarrow \mathbb{R}$ is Lipschitz and $g \in L^1(\Omega)$, then
$$ \int_{\Omega}{g(x) |\nabla u(x)| dx} = \int_{\mathbb{R}}{ \left( \int_{u^{-1}(t)} g(x) d \mathcal H^{n-1} \right) dt}.$$
We will use this identity for the function
$$ g(x) = \frac{\chi_{\varepsilon \leq |u(x)| \leq 2\varepsilon}}{|u|^{\alpha}}$$
which is bounded and thus in $L^1$. \\

\subsection{The Proof.} We will describe the proof of Theorem 2 which nicely decouples into several different steps. We annotate the steps and give precise details below. The coarea formula and a pigeonhole principle imply (Step 1) the existence of
$$ -2 \varepsilon < t_1 < - \varepsilon  < 0 < \varepsilon < t_2 < 2\varepsilon$$
such that, for $i=1,2$,
$$ \mathcal{H}^1\left( \left\{x: u(x) = t_i\right\}\right) \leq (2\varepsilon)^{\alpha - 1} \int_{[0,1]^2}{ \frac{|\nabla u|}{|u|^{\alpha}} dx}.$$
The next step is topological: we argue that, for all $t \in \mathbb{R}$, the set
$$ \left\{x \in [0,1]^2: u(x) \leq t\right\} \qquad \mbox{is a union of simply connected components.}$$
Let us take such a connected component and assume it is not simply connected. Then there is at least one `hole`, where $u$ is bigger than $t$. This means that $u$ has a local maximum which contradicts the assumption that $\Delta u \geq 1$. We now fix these values and introduce a shift of the original function 
$$ v(x) = u(x) - t_2\quad \mbox{and estimate the size of} \quad \left\{ x \in [0,1]^2: 0 \leq v(x) \leq t_2 - t_1\right\}.$$
The set $\left\{x: v(x) \leq t_2 - t_1\right\}$ is the union of simply connected sets. We fix such a simply connected set $A$. We observe that
$$ \left\{ x \in A: v(x) \leq 0\right\} \qquad \mbox{is the union of simply connected sets (see Fig. 2).}$$

\begin{center}
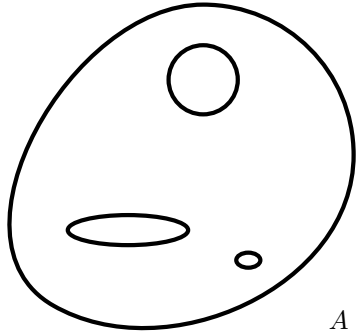
\begin{figure}[h!]
\begin{tikzpicture}[scale=2]
\draw [ultra thick] (0,0) to[out=330, in=270] (2,1) to [out=90, in =0] (1,2) to[out=180, in=150] (0,0);
\draw [ultra thick] (1,1.5) circle (0.23cm);
\draw [ultra thick] (1.3,0.3) ellipse (0.08cm and 0.05cm);
\draw [ultra thick] (0.5,0.5) ellipse (0.4cm and 0.1cm);
\node at (1.9, -0.1) {$A$};
\end{tikzpicture}
\caption{A connected component $A$ and three simply connected subsets where $v$ is smaller than 0: two of them are big one, is small.}
\end{figure}
\end{center}

We label the connected components of $\left\{ x \in A: v(x) \leq 0\right\}$ by $C_1, C_2, \dots$ and put them into two different categories: the ones for which $\mathcal{H}^1(\partial C_i) \geq \sqrt{\varepsilon}$
and the `small' ones for which  $\mathcal{H}^1(\partial C_i) < \sqrt{\varepsilon}$. If $\mathcal{H}^1(\partial C_i) < \sqrt{\varepsilon}$, then (Step 2)
we will show that
$$ \min_{x \in C_i} v(x) \geq - 4 \varepsilon \| \Delta u\|_{L^{\infty}}.$$
We define $\Omega$ as the set $A$ with all `small' connected components of $\left\{x: v(x) \leq 0\right\}$ included and all large connected components removed
$$ \Omega = A \setminus \bigcup_{\mathcal{H}^1(\partial C_i) \geq \sqrt{\varepsilon}}{C_i}.$$
We observe that, for all $x \in \Omega$,
$$ - 4 \|\Delta\|_{L^{\infty}} \varepsilon \leq v(x) \leq t_2 - t_1 \leq 4 \varepsilon.$$
We will then show (Step 3) that this has an interesting implication: let $B_x(t)$ be a Brownian motion started in $x$ at time $t$. We assume that it gets absorbed when hitting the boundary $\partial \Omega$
and we use $\tau_x$ to denote the first hitting time. Then, for all $x \in \Omega$,
$$ \mathbb{E}~\tau_x \leq  (4\|\Delta\|_{L^{\infty}} + 4) \varepsilon.$$
This means that the domain $\Omega$ is very interesting: no matter where one starts, Brownian motion is likely to hit the boundary rather quickly.
This has a physical interpretation: if we assume to have temperature 0 throughout $\Omega$ and have fixed boundary temperature 1 at the boundary and assume $\phi(t,x)$ to denote the temperature
in the point $x$ at time $t$, then, by Markov's inequality,
$$ \phi(  (8\|\Delta\|_{L^{\infty}} + 8) \varepsilon , x) \geq \frac{1}{2}.$$
Put differently, if Brownian motion hits the boundary quickly, then the domain is heating up quickly. The argument then follows from using heat content estimates (Step 4): we will show that
$$ \frac{1}{2} | \Omega| \lesssim \int_{\Omega}{  \phi(  (8+8\|\Delta\|_{L^{\infty}}) \varepsilon, x) dx} \lesssim \|\Delta\|_{L^{\infty}}^{1/2} \cdot  \sqrt{ \varepsilon} \cdot |\partial \Omega|,$$
where the implicit constants are universal. Summing over all connected domains, we observe that 
$$ | \left\{ x \in [0,1]^2: 0 \leq v(x) \leq t_2 - t_1\right\}| \lesssim  \|\Delta\|_{L^{\infty}}^{1/2} \cdot \sqrt{ \varepsilon} \cdot \sum_{i=1}^{2} \mathcal{H}^1\left( \left\{x: u(x) = t_i\right\}\right).$$
However, by construction, we have 
 $$ \sum_{i=1}^{2} \mathcal{H}^1\left( \left\{x: u(x) = t_i\right\}\right) \leq (2\varepsilon)^{\alpha - 1} \int_{[0,1]^2}{ \frac{|\nabla u|}{|u|^{\alpha}} dx}$$
 and this concludes the argument. A small addendum is that we need to be careful with regards to domains that touch the boundary since then some of the estimates will
 need to be modified (Step 5) and this accounts for the additional $\sim_c \sqrt{\varepsilon}.$

\subsection{Details of the Argument.} We now describe the details.\\

\textbf{Step 1.}
Let us fix $\varepsilon > 0$. 
We use the coarea formula to estimate
\begin{align*}
\int_{[0,1]^2}{ \frac{|\nabla u|}{|u|^{\alpha}} dx} &\geq \int_{[0,1]^2}{ \frac{|\nabla u|}{|u|^{\alpha}} \chi_{\varepsilon \leq u(x) \leq 2\varepsilon} dx} \\
&=  \int_{\varepsilon < t < 2\varepsilon}{ \frac{\mathcal{H}^1\left( \left\{x: u(x) = t\right\}\right)}{|t|^{\alpha}} dt}
\end{align*}
and thus
$$ \frac{1}{\varepsilon}  \int_{\varepsilon < t < 2\varepsilon}{ \frac{\mathcal{H}^1\left( \left\{x: u(x) = t\right\}\right)}{|t|^{\alpha}} dt} \leq \varepsilon^{-1} \int_{[0,1]^2}{ \frac{|\nabla u|}{|u|^{\alpha}} dx}$$
and therefore there exists a $\varepsilon \leq t \leq 2\varepsilon$ such that
$$ \mathcal{H}^1\left( \left\{x: u(x) = t\right\}\right) \leq (2\varepsilon)^{\alpha - 1} \int_{[0,1]^2}{ \frac{|\nabla u|}{|u|^{\alpha}} dx}.$$
The same argument can be applied to prove the existence of a level set $\left\{x: u(x) = t_2\right\}$ for some $-2\varepsilon < t_2 < - \varepsilon$ for
which 
$$ \mathcal{H}^1\left( \left\{x: u(x) = t\right\}\right) \leq (2\varepsilon)^{\alpha - 1} \int_{[0,1]^2}{ \frac{|\nabla u|}{|u|^{\alpha}} dx}.$$\

\textbf{Step 2.} Step 2 boils down to the following Lemma.

\begin{lem}
Let $D \subset \mathbb{R}^2$ be a simply connected domain and let $\psi:D \rightarrow \mathbb{R}$ be a function satisfying $1 \leq \Delta \psi \leq c$ that vanishes on the boundary $\partial D$. Then
$$ \min_{x \in D}{\psi(x)} \geq - 4  c \cdot \mathcal{H}^1(\partial D)^2$$
\end{lem}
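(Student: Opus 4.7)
The plan is to show that the relevant information is contained in the upper bound $\Delta\psi \leq c$: the lower bound $\Delta\psi \geq 1$ enters only through the maximum principle (which forces $\psi \leq 0$ in $D$ and hence makes $\min_D \psi$ a genuinely negative quantity to control). The central idea is to compare $\psi$ against a multiple of the torsion function of $D$.

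Let $T_D$ denote the solution of $-\Delta T_D = 1$ in $D$ with $T_D \equiv 0$ on $\partial D$; it is nonnegative. The function $\psi + c T_D$ has Laplacian $\Delta\psi - c \leq 0$ and vanishes on $\partial D$, so by the minimum principle for superharmonic functions it is nonnegative throughout $D$. This already gives
$$ \min_{x \in D} \psi(x) \;\geq\; -c\, \|T_D\|_{L^\infty(D)}. $$

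It remains to bound $\|T_D\|_{L^\infty(D)}$ in terms of $\mathcal{H}^1(\partial D)$, and I would do this by an enclosing-ball argument. Fix any $y_0 \in D$ and set $R = \operatorname{diam}(D)$, so that $D \subset B(y_0, R)$. Domain monotonicity for torsion functions (applied to $T_{B(y_0,R)} - T_D$, which is harmonic on $D$ and nonnegative on $\partial D$) yields $T_D \leq T_{B(y_0,R)}$ on $D$. The torsion function of a ball is explicit, $T_{B(y_0,R)}(x) = (R^2 - \|x-y_0\|^2)/4$, so $\|T_D\|_{L^\infty(D)} \leq R^2/4$. Finally, one converts diameter into perimeter using simple connectedness: a bounded simply connected planar domain has connected boundary, so any two points of $\partial D$ are joined by a subarc of $\partial D$ of length at most $\mathcal{H}^1(\partial D)$, giving $\operatorname{diam}(D) = \operatorname{diam}(\partial D) \leq \mathcal{H}^1(\partial D)$. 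Chaining the bounds gives $\min_D \psi \geq -c\,\mathcal{H}^1(\partial D)^2/4$, which is comfortably stronger than the claimed $-4c\,\mathcal{H}^1(\partial D)^2$.

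The only mildly delicate step I anticipate is the last geometric one: for a wild simply connected domain one has to justify connectedness of $\partial D$ and the diameter-length inequality. If $\mathcal{H}^1(\partial D) = \infty$ the conclusion is vacuous, so one may assume rectifiability; in that regime the needed facts are standard, and in any case one can approximate $D$ from the inside by smoothly bounded simply connected subdomains and pass to the limit, since the comparison $\psi \geq -c T_D$ is stable under such approximation.
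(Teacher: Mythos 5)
Your proposal is correct, and it takes a genuinely different route from the paper at the crucial step. Both arguments share the same skeleton: one compares $\psi$ against (a multiple of) the torsion function $T_D$, so that everything reduces to bounding $\|T_D\|_{L^\infty(D)}$. The paper phrases this probabilistically — via Feynman--Kac, $\psi(x) \geq -c\,\mathbb{E}\,\tau_x$, and $\mathbb{E}\,\tau_x$ is (up to normalization) exactly $T_D(x)$ — and your superharmonic comparison $\psi + cT_D \geq 0$ is the deterministic shadow of the same inequality. Where the two diverge is in bounding the torsion function itself: the paper invokes the Ba\~nuelos--Carroll maximal expected lifetime estimate $\sup_x \mathbb{E}\,\tau_x \leq 4\,\mathrm{inrad}(D)^2$ (a conformal-mapping-flavored result about simply connected domains), then chains $\mathrm{inrad}(D)^2 \leq |D| \leq \mathcal{H}^1(\partial D)^2$. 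You instead enclose $D$ in a ball of radius $R = \mathrm{diam}(D)$, use domain monotonicity of the torsion function and the explicit formula $T_{B}(x) = (R^2 - \|x-y_0\|^2)/4$ to get $\|T_D\|_\infty \leq R^2/4$, and then use connectedness of $\partial D$ (guaranteed by simple connectedness) together with the $1$-Lipschitz projection argument to get $\mathrm{diam}(D) \leq \mathcal{H}^1(\partial D)$. Your route is more elementary and self-contained, needs no input beyond the maximum principle, and yields the stronger constant $-c\,\mathcal{H}^1(\partial D)^2/4$ in place of $-4c\,\mathcal{H}^1(\partial D)^2$. The one small thing worth noting: the diameter-versus-perimeter step is cleanest via the projection argument (the orthogonal projection of a connected set onto a line is an interval and $\mathcal{H}^1$ does not increase under $1$-Lipschitz maps), which avoids needing $\partial D$ to literally be a rectifiable curve; the ``subarc'' phrasing is a bit stronger than what is needed or available for a wild boundary, but the conclusion you want from it is correct and your approximation remark covers the remaining technicalities.
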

\begin{proof} There are different ways of proving this statement. We return to the probabilistic interpretation and introduce the equation
\begin{align*}
 \frac{\partial w}{\partial t}  &= \Delta w - \Delta \psi  \qquad \mbox{inside}~\Omega\\
 w &= \psi \qquad \qquad \quad \quad \mbox{on}~\partial \Omega.
\end{align*}
We observe that $\psi$ is the stationary-in-time solution. In particular, the Feynman-Kac formula implies that $\psi$ satisfies
$$ \psi(x) = \mathbb{E} ~\psi(\omega_x(t)) - \mathbb{E} \int_{0}^{t \wedge \tau_x}{ (\Delta \psi)(\omega_x(t)) dt},$$
where $\omega_x(t)$ is Brownian motion started in $x$, running until time $t$ and getting absorbed by the boundary upon impact. The boundary hitting time 
for a Brownian motion started in $x$ is denoted by $\tau_x$. We let $t \rightarrow \infty$ so that things simplify to
$$ \psi(x) =  \mathbb{E} \int_{0}^{ \tau_x}{ -(\Delta \psi)(\omega_x(t)) dt} \geq -c \cdot \mathbb{E} ~\tau_x.$$
The proof is then completed by a standard estimate (see e.g. Ba\~nuelos \& Carroll \cite{ban}) which states that
$$ \sup_{x \in D} ~\mathbb{E} ~\tau_x  \leq 4 \cdot \mbox{inrad}(D)^2.$$
Weaker geometric bounds suffice for our purpose, we have 
$$ \mbox{inrad}(D)^2 \leq |D| \leq  \mathcal{H}^1(\partial D)^2$$
which implies the result.
\end{proof}

\textbf{Step 3.} Step 3 follows from the following Lemma.

\begin{lem}
Let $\Omega \subset \mathbb{R}^2$ be a bounded planar domain and assume that $\psi:\Omega \rightarrow \mathbb{R}$ satisfies
$$ \forall x\in \Omega: \qquad a \leq \psi(x) \leq b$$
as well as $\Delta \psi \geq 1$. Then, for any $x \in \Omega$, the first hitting time $\tau_x$ of Brownian motion started in $x$, satisfies
$$ \mathbb{E}~\tau_x \leq b-a.$$
\end{lem}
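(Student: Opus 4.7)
The plan is to proceed exactly in the same probabilistic style as in the second proof of Proposition 2 and in Lemma 1 above, by applying the Feynman--Kac / Dynkin formula to $\psi$ along a Brownian path and exploiting the lower bound $\Delta \psi \geq 1$ to convert the pointwise oscillation of $\psi$ into a bound on the expected exit time.

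Concretely, let $\omega_x(t)$ denote Brownian motion started at $x \in \Omega$ and absorbed upon hitting $\partial \Omega$, with first exit time $\tau_x$. Since $\Omega$ is bounded, standard estimates give $\mathbb{E}\,\tau_x < \infty$, in particular $\tau_x < \infty$ almost surely. Applying Dynkin's formula (or, equivalently, the stationary-in-time solution of the heat equation $\partial_t w = \Delta w$ with initial/boundary data $\psi$, as used earlier in the paper) to $\psi$ on the stopped path yields, for every $t > 0$,
\begin{equation*}
\mathbb{E}\,\psi(\omega_x(t\wedge\tau_x)) - \psi(x) \;=\; \mathbb{E}\int_0^{t\wedge\tau_x} (\Delta \psi)(\omega_x(s))\,ds.
\end{equation*}

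From here the proof is essentially two inequalities. On the right-hand side, the hypothesis $\Delta \psi \geq 1$ gives $\mathbb{E}\int_0^{t\wedge\tau_x}(\Delta \psi)(\omega_x(s))\,ds \geq \mathbb{E}[t\wedge\tau_x]$. On the left-hand side, the pointwise bounds $a \leq \psi \leq b$ give $\mathbb{E}\,\psi(\omega_x(t\wedge\tau_x)) - \psi(x) \leq b - a$. Combining, $\mathbb{E}[t\wedge\tau_x] \leq b-a$, and letting $t \to \infty$ via monotone convergence yields $\mathbb{E}\,\tau_x \leq b-a$, as desired.

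There is no real obstacle here: the only mild technical points are the a.s. finiteness of $\tau_x$ (immediate from boundedness of $\Omega$) and the passage to the limit $t \to \infty$, both of which are routine. The conceptual content is exactly the one advertised in the sketch preceding the lemma: an $\Omega$ on which $\psi$ has small oscillation but $\Delta \psi \geq 1$ must be ``thin'' in the probabilistic sense that Brownian motion cannot linger in it for long, because each unit of time spent inside forces $\psi$ to grow by at least one unit on average, and $\psi$ only has a total budget of $b-a$ to spend before exiting.
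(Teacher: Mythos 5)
Your proof is correct and follows essentially the same route as the paper: both apply the Feynman--Kac / Dynkin identity to $\psi$ along the stopped Brownian path, use $\Delta\psi \geq 1$ to lower-bound the time integral by $\mathbb{E}[t\wedge\tau_x]$, use $a\leq\psi\leq b$ to upper-bound the martingale term by $b-a$, and then let $t\to\infty$.
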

\begin{proof}
We again work with the equation
\begin{align*}
 \frac{\partial w}{\partial t}  &= \Delta w - \Delta \psi  \qquad \mbox{inside}~\Omega\\
 w &= \psi \qquad \qquad \quad \quad \mbox{on}~\partial \Omega.
\end{align*}
and the associated stochastic interpretation
$$ \psi(x) = \mathbb{E} ~\psi(\omega_x(t)) - \mathbb{E} \int_{0}^{t \wedge \tau_x}{ (\Delta \psi)(\omega_x(t)) dt}.$$
We rewrite this equation as
\begin{align*}
 \mathbb{E}~t \wedge \tau_x &\leq  \mathbb{E} \int_{0}^{t \wedge \tau_x}{ (\Delta \psi)(\omega_x(t)) dt} \\
&\leq \mathbb{E} ~\psi(\omega_x(t)) - \psi(x)\\
&\leq b-a.
\end{align*}
The result now follows from letting $t \rightarrow \infty$.
\end{proof}

\textbf{Step 4.} It remains to derive the relevant heat content estimate. We note that these estimates are not new and various sharper
forms have been derived (see e.g. \cite{van} and references therein). We will only need a simple specialized estimate which can be developed
from scratch.

\begin{center}
\begin{figure}[h!]
\begin{tikzpicture}
\filldraw [ultra thick] (1,1) circle (0.13cm);
\filldraw [ultra thick] (0,0) ellipse (0.5cm and 0.03cm);
\filldraw [ultra thick] (3,0) circle (0.15cm);
\draw [ultra thick] (-1,1.7) to[out=40, in = 180] (4, 1) to[out=0, in=10] (4,-1);
\draw [ultra thick] (-2, -0.5) to[out=330, in=190] (4,-1) ;
\draw [ultra thick] (-2, -0.5) to[out=150, in=220] (-1,1.7) ;
\node at (1,-0.8) {$\Omega$};
\end{tikzpicture}
\caption{Consider such a set $\Omega$: a nice simply connected domain with several subdomains removed. Assuming that all subdomains have large boundary $\mathcal{H}^1(\partial C_i) \geq \sqrt{\varepsilon}$, how quickly can such a domain be heated up within $\varepsilon$ units of time?}
\end{figure}
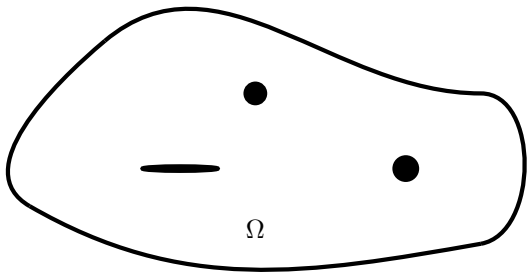
\end{center}
The question is geometric (see Fig. 3). We are given a simply connected domain and remove several simply connected sets with large boundary $\mathcal{H}^1(\partial C_j) \geq \varepsilon$. The question
is: if this domain $\Omega$ is initially at temperature 0 and we have constant temperature 1 on the boundary, what is the total temperature in the room at time $t$?
More formally, we introduce $\phi(t,x)$ as the solution of
\begin{align*}
 \frac{\partial \phi}{\partial t}  &= \Delta \phi   \qquad \mbox{inside}~\Omega\\
 \phi(0,x) &= 0   \qquad \quad \mbox{inside}~\Omega\\
 \phi(t,x) &= 1 \qquad  \quad \mbox{on}~\partial \Omega.
\end{align*}
We are interested in estimates on
$$ \int_{\Omega}{ \phi(t,x) dx} \qquad \mbox{from above}.$$
Moreover, these estimates are supposed to be uniform in everything since we have little control over the domain and no control over $\varepsilon$.

\begin{lem}
We have, for sets of the type under consideration and for some universal implicit constant,
$$ \int_{\Omega}{ \phi(\varepsilon,x) dx} \lesssim \sqrt{\varepsilon} \cdot | \partial \Omega|.$$
\end{lem}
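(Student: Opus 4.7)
The plan is to exploit the probabilistic representation $\phi(\varepsilon,x) = \mathbb{P}_x(\tau \leq \varepsilon)$, where $\tau$ is the first hitting time of $\partial\Omega$ by planar Brownian motion started at $x$. Let $d(x) = \mathrm{dist}(x,\partial\Omega)$. Since Brownian motion cannot reach $\partial\Omega$ without first exiting the open ball $B(x,d(x)) \subset \Omega$, we may compare with the exit time from a disk. Applying the reflection principle to each coordinate of 2D Brownian motion yields the Gaussian tail bound
$$ \phi(\varepsilon,x) \;\leq\; C\exp\!\bigl(-c\, d(x)^2/\varepsilon\bigr) $$
for some universal constants $c,C>0$, to be used in conjunction with the trivial bound $\phi \leq 1$ when $d(x)^2 \lesssim \varepsilon$. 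This pointwise decay is the heart of the estimate: it captures the intuition that after time $\varepsilon$ the heat has only penetrated a distance of order $\sqrt{\varepsilon}$ into the domain.

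The second step is to integrate the pointwise bound via the layer cake / coarea formula. Changing variable and differentiating the Gaussian profile gives
$$ \int_\Omega \phi(\varepsilon,x)\, dx \;\lesssim\; \int_0^\infty \bigl|\{x \in \Omega : d(x) < r\}\bigr| \cdot \frac{r}{\varepsilon}\, e^{-c r^2/\varepsilon}\, dr, $$
so everything reduces to a tubular-neighborhood estimate on $\Omega$.

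The third step is the geometric estimate
$$ \bigl|\{x \in \Omega : d(x) < r\}\bigr| \;\lesssim\; r\,|\partial\Omega| + r^2, $$
which follows by covering $\partial\Omega$ with at most $\lesssim |\partial\Omega|/r + 1$ disks of radius $r$; the dilates of radius $2r$ then cover the tubular neighborhood and have total area $\lesssim r(|\partial\Omega|+r)$. Substituting this bound into the integral and evaluating the two resulting Gaussian moments produces
$$ \int_\Omega \phi(\varepsilon,x)\, dx \;\lesssim\; \sqrt{\varepsilon}\,|\partial\Omega| + \varepsilon. $$
In the intended application of the lemma, the requirement $\mathcal{H}^1(\partial C_i) \geq \sqrt{\varepsilon}$ on the removed subdomains forces $|\partial\Omega| \gtrsim \sqrt{\varepsilon}$, so the $\varepsilon$ term is absorbed into $\sqrt{\varepsilon}\,|\partial\Omega|$, giving the claimed bound.

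The main obstacle is really just careful bookkeeping: verifying that the Gaussian tail bound on $\phi$ is valid with universal constants (which amounts to the standard 1D reflection-principle argument for each coordinate of Brownian motion), and confirming that the covering estimate for $\{d<r\}$ holds uniformly across the class of domains we have constructed, where the boundary consists of one outer curve together with a possibly large collection of interior curves (the removed components $C_i$). Neither step relies on anything beyond classical planar heat-kernel technology.
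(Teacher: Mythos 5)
Your proof is correct in outline and takes a route that is genuinely different from the paper's, so let me compare. The paper decomposes $\phi$ via the union bound over the individual boundary components (the outer curve and each removed $C_j$), estimates each piece by a one--dimensional reflection--principle comparison with the \emph{isolated} obstacle, and then integrates using the classical exterior parallel--curve length bound $\mathcal{H}^1(\{d(\cdot,C_j)=s\}) \leq \mathcal{H}^1(\partial C_j) + 2\pi s$. You instead prove the global pointwise Gaussian profile $\phi(\varepsilon,x)\lesssim e^{-cd(x)^2/\varepsilon}$ (valid since the heat must exit $B(x,d(x))\subset\Omega$ before it can touch $\partial\Omega$) and then reduce everything to a tubular--neighborhood volume bound via layer cake. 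Both are sound; your route is slightly more elementary (covering instead of the parallel--curve differential inequality) while the paper's union--bound decomposition handles the disconnectedness of $\partial\Omega$ automatically.

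There is one small imprecision you should repair: the covering claim ``$\partial\Omega$ can be covered by $\lesssim |\partial\Omega|/r+1$ disks of radius $r$'' is false for a general disconnected boundary, since each connected component needs at least one disk regardless of its length. With $N$ components of lengths $\ell_i$ the correct count is $\lesssim \sum_i(\ell_i/r+1) = |\partial\Omega|/r + N$, and hence $\bigl|\{d<r\}\bigr| \lesssim r|\partial\Omega| + Nr^2$. What rescues the argument is exactly the hypothesis you invoke only at the very end: since every removed component has $\mathcal{H}^1(\partial C_i)\geq\sqrt{\varepsilon}$, one has $N \lesssim |\partial\Omega|/\sqrt{\varepsilon}$, so $\bigl|\{d<r\}\bigr| \lesssim r|\partial\Omega| + (r^2/\sqrt{\varepsilon})|\partial\Omega|$. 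Feeding this into your Gaussian moment integral still yields $\lesssim \sqrt{\varepsilon}\,|\partial\Omega|$ (the $r^2$ term contributes $\sim(\varepsilon/\sqrt{\varepsilon})|\partial\Omega| = \sqrt{\varepsilon}|\partial\Omega|$), so the conclusion survives; but the length lower bound on the $C_i$ must be used in the covering step itself, not merely to absorb an additive $\varepsilon$ at the end.
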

\begin{proof} There are two sources of heat: the `big' boundary of $\Omega$ that encloses everything and the boundaries of the `large' sets that were removed. All of them contribute to heating
the room. However, their effects are subadditive. This can be easily seen by the probabilistic interpretation: if we start Brownian motion in a point $x$ then the likelihood of hitting an obstacle
$A$ or hitting $B$ is less than the likelihood of hitting $A$ in isolation or $B$ in isolation (if only $A$ is present, then there are paths that go through where $B$ would have been and still end
up hitting $A$). Of course, if the obstacles are tiny, then the effect is \textit{almost} additive. In the language of probability theory, this is merely the union bound.\\

Let us fix a point $x$ and let us assume that $C_1, \dots, C_n, \dots$ denotes the simply connected domains that were removed (whose boundary length is at least $\sqrt{\varepsilon}$). 
We define a series of events $A, A_1, \dots, A_n, \dots$ where $A$ is the outmost boundary of $\Omega$ (the connected component of the level set) and
$$ A_k(x,t) = \mathbb{P}\left(\mbox{Brownian motion started in $x$ hits $C_k$ within $t$ units of time}\right).$$
We note that the convention here is that the Brownian motion gets absorbed upon impact with the boundary (so it cannot hit $C_1$ and then $C_3$, in particular it can hit at most one $C_i$).
We can then interpret the heat equation as a probability and apply the union bound
$$ \phi(t,x) = \mathbb{P}\left( A(t,x) \cup \bigcup_{k=1}^{\infty}{A_k(t,x)} \right) \leq \mathbb{P}(A(t,x)) + \sum_{k=1}^{\infty}{\mathbb{P}(A_k(t,x))}.$$
Let us now fix $A_j$ and obtain an upper bound for it (the argument will also apply to $A$). We estimate it further from above by assuming that $\Omega$ does not even exist: we assume
$C_j$ lies isolated in $\mathbb{R}^2$. Fix some $x \in \mathbb{R}^2$ such that $x \notin C_j$. We assume w.l.o.g. that $x$ lies on the real line and that the same is true for the point on $C_j$
that is closest to $x$. The likelihood of a Brownian motion started in $x$ hitting $C_j$ within $t$ units of time can be bounded from above by the likelihood of the $x-$coordinate of the Brownian
motion (itself a Brownian motion) traveling at least distance $d(x,  C_j)$. 
The reflection principle implies that the one-dimensional Brownian motion $B(t)$ satisfies
\begin{align*}
\mathbb{P} \left(\max_{0 \leq s \leq t} B(s) \geq d(x,  C_j)\right) &= 2\cdot \mathbb{P}{\left( B(t) \geq d(x, C_j)\right)}\\
&= \frac{2}{\sqrt{2 \pi t}} \int_{d(x,C_j)}^{\infty}{ \exp\left(-\frac{y^2}{2t}\right)  dy}.
\end{align*}
Thus we have obtained the bound
$$ \mathbb{P}(A_j(x,t)) \leq \frac{2}{\sqrt{2 \pi t}} \int_{d(x,C_j)}^{\infty}{ \exp\left(-\frac{y^2}{2t}\right)  dy}.$$
Of course, we also have the trivial estimate $ \mathbb{P}(A_j(x,t))  \leq 1$. We will now apply both these estimates in different regimes. We use the trivial estimate  $ \mathbb{P}(A_j(x,t))  \leq 1$ inside the $\sqrt{t}-$neighborhood
of $C_j$. 
\begin{align*}
 \int_{\mathbb{R}^2} \mathbb{P}(A_j(x,t)) dx &\lesssim  |\left\{x: d(x,C_j) \leq \sqrt{t} \right\}| +  \int_{d(x, C_j) \geq \sqrt{t}} \mathbb{P}(A_j(x,t)) dx
\end{align*}
We bound the second integral by
\begin{align*}
\int_{d(x, C_j) \geq \sqrt{t}} \mathbb{P}(A_j(x,t)) dx &\leq \int_{d(x, C_j) \geq \sqrt{t}}  \frac{2}{\sqrt{2 \pi t}} \int_{d(x,C_j)}^{\infty}{ \exp\left(-\frac{y^2}{2t}\right)  dy} dx
\end{align*}
Since this upper bound depends only on the distance, we can use the coarea formula to change coordinates and bound this integral, denoted by $J$, from above by
$$ J \leq   \int_{\sqrt{t}}^{\infty} \mathcal{H}^1\left( \left\{x: d(x,C_j) = s\right\}\right)  \frac{2}{\sqrt{2 \pi t}} \int_{s}^{\infty}{ \exp\left(-\frac{y^2}{2t}\right)  dy}$$
This is now where isoperimetry comes into play. We note an upper bound on the length of an exterior parallel curve (see e.g. Bandle \cite{band} or Do Carmo \cite{do})
$$ \mathcal{H}^1\left( \left\{x: d(x,C_j) = s\right\}\right) - \mathcal{H}^1(\partial C_j) \leq 2\pi s.$$
Therefore, we can bound
\begin{align*}
 J   &\leq   \int_{\sqrt{t}}^{\infty} \mathcal{H}^1\left( \left\{x: d(x,C_j) = s\right\}\right)  \frac{2}{\sqrt{2 \pi t}} \int_{s}^{\infty}{ \exp\left(-\frac{y^2}{2t}\right)  dy}\\
 &\lesssim  \int_{\sqrt{t}}^{\infty} \mathcal{H}^1\left( \partial C_j \right)  \frac{2}{\sqrt{2 \pi t}} \int_{s}^{\infty}{ \exp\left(-\frac{y^2}{2t}\right)  dy} \\
 &+ \int_{\sqrt{t}}^{\infty}  \frac{2 s}{\sqrt{2 \pi t}} \int_{s}^{\infty}{ \exp\left(-\frac{y^2}{2t}\right)  dy}.
\end{align*}
We now specialize $t = \varepsilon$. Then the first integral simplifies to
$$  \int_{\sqrt{\varepsilon}}^{\infty} \mathcal{H}^1\left( \partial C_j \right)  \frac{2}{\sqrt{2 \pi \varepsilon}} \int_{s}^{\infty}{ \exp\left(-\frac{y^2}{2\varepsilon}\right)  dy} \lesssim \sqrt{\varepsilon} \cdot  \mathcal{H}^1\left( \partial C_j \right).$$
The second integral simplifies to, making use of $\mathcal{H}^1(\partial C_j) \geq \sqrt{\varepsilon}$,
$$ \int_{\sqrt{e}}^{\infty}  \frac{2 s}{\sqrt{2 \pi e}} \int_{s}^{\infty}{ \exp\left(-\frac{y^2}{2e}\right)  dy} \lesssim \varepsilon \leq  \sqrt{\varepsilon} \cdot \mathcal{H}^1\left( \partial C_j \right).$$
\end{proof}

 \textbf{Step 5.} We now finally collect all the results in one place. We are given the domain 
 $$ \Omega = A \setminus \bigcup_{\mathcal{H}^1(\partial C_i) \geq \sqrt{\varepsilon}}{C_i} \subset \Omega$$
 for which we have estimates on $\mathcal{H}^1(\partial \Omega \cap (0,1)^2)$. This, implies, by adding the boundary of the unit square, that
 $$\mathcal{H}^1(\partial \Omega) \leq 4 + \mathcal{H}^1(\partial \Omega \cap (0,1)^2).$$

 Our knowledge about properties of $\Omega$ is implicit: we know that for any point $x \in \Omega$, a Brownian motion
 started in $x$ hits the boundary with an expected hitting time of 
 $$ \mathbb{E}~\tau_x \leq  (4\|\Delta\|_{L^{\infty}} + 4) \varepsilon.$$
 This means that if we start the heat equation with zero initial conditions in $\Omega$ and constant boundary conditions 1 on $\partial \Omega$, then the solution
 of the heat equation is uniformly bounded from below by $1/2$ at time
 $$ t=  (8\|\Delta\|_{L^{\infty}} + 8) \varepsilon.$$
 However, we also have an estimate telling us that the total amount of heat inside the domain satisfies
 $$ \int_{\Omega}{ \phi(\varepsilon,x) dx} \lesssim \sqrt{\varepsilon} \cdot | \partial \Omega|.$$
 This implies the desired result.
 
 \subsection{Champagne subdomains.} One natural question is whether the upper bound in our assumption $1 \leq \Delta u \leq c$ is necessary or whether it could be weakened. Our proof
 gives a rather precise characterization how `bad' examples could look like. We only used $\Delta u \leq c$ in one spot in the proof: to conclude that simply connected subsets of $\left\{x: u(x) \leq 0\right\}$,
whose boundary is shorter than $\sqrt{\varepsilon}$ only contain function values that are a tiny bit smaller than 0. If the Laplacian can be arbitrarily positive, then the function could assume a lot
smaller values inside those domains.
 
 \begin{center}
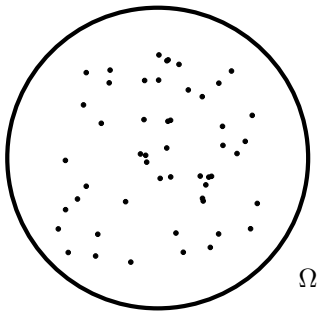
\begin{figure}[h!]
\begin{tikzpicture}[scale=2]
\draw [ultra thick] (0,0) circle (1cm);
\foreach \x in {0,...,50}{
    \filldraw (rand*0.7,0.7*rand) circle (0.015cm);
}
\node at (1,-0.8) {$\Omega$};
\end{tikzpicture}
\caption{A nice domain $\Omega$ with many small subdomains removed.}
\end{figure}
\end{center}

However, we still have the property that Brownian motion started in any point has a hitting time whose expectation is uniformly bounded by $\varepsilon$. This means that these `bubbles' need
to nicely distributed throughout the domain. This problem has actually been studied in the literature where such domains are also known as champagne domains. We refer to \cite{chm2, chm1, ch0, ch00, ch000, ch1, ch2}. One natural question now is: are the functions satisfying $\Delta u \geq 1$ extremal when they have $\Delta u$ huge in inside the bubbles of a champagne domain?

\subsection{Proof of Theorem 3}

\begin{proof} We want to prove the existence of a constant $c_n$, depending only on $n$, such that for all $u:[0,1]^n \rightarrow \mathbb{R}$ satisfying $\Delta u \geq 1$, we have
$$ \left|\left\{x \in [0,1]^n: |u(x)| \geq c_n \right\} \right| \cdot \| u\|_{L^{\infty}}^{} \geq c_n.$$
We will prove it for the unit ball instead of the unit cube
$$ \left|\left\{\|x\| \leq 1: |u(x)| \geq c_n \right\} \right| \cdot \| u\|_{L^{\infty}(\left\{x:\|x\| \leq 1\right\})}^{} \geq c_n$$
which implies the original result since the unit cube contains a ball of radius $1/2$ and we only work up to constants depending only on the dimension.
 Now suppose the desired statement is false. Then, for any $\varepsilon > 0$, there exists a function satisfying $\Delta u \geq 1$ for which
$$ \left|\left\{\|x\| \leq 1: |u(x)| \geq \varepsilon \right\} \right| \cdot \| u\|_{L^{\infty}}^{} \leq \varepsilon.$$
We will now argue for a fixed (but unspecified) value of $\varepsilon$ and will then see that $\varepsilon$ cannot be chosen arbitrarily small.
 Fubini's theorem combined with the pigeonhole principle implies the existence of a $0.99 < t < 1$ for which
$$ \mathcal{H}^{n-1}\left(  \left\{\|x\| = t: |u(x)| \geq \varepsilon \right\} \right) \cdot \|u\|_{L^{\infty}(\|x\| \leq 1)} \lesssim_n \varepsilon.$$
We fix this value of $t$.

\begin{center}
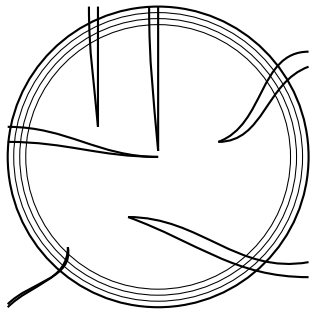
\begin{figure}[h!]
\begin{tikzpicture}[scale=4]
\draw [thick] (0.5,0.5) circle (0.5cm);
\draw [] (0.5,0.5) circle (0.48cm);
\draw [] (0.5,0.5) circle (0.46cm);
\draw [] (0.5,0.5) circle (0.44cm);
\draw [thick]  (0.7, 0.55) to[out=0, in=200] (1,0.8);
\draw [thick] (0.7, 0.55) to[out=20, in=180] (1,0.85);
\draw [thick]  (0.4, 0.3) to[out=340, in=180] (1,0.1);
\draw [thick] (0.4, 0.3) to[out=0, in=190] (1,0.15);
\draw [thick]  (0.5, 0.5) to[out=180, in=0] (0,0.55);
\draw [thick] (0.5, 0.5) to[out=180, in=0] (0,0.6);
\draw [thick] (0.3, 0.6) to[out=90, in=270] (0.3,1);
\draw [thick] (0.3, 0.6) to[out=95, in=270] (0.27,1);
\draw [thick] (0.5, 0.52) to[out=90, in=270] (0.5,1);
\draw [thick] (0.5, 0.52) to[out=95, in=270] (0.47,1);
\draw [thick] (0.2, 0.2) to[out=270, in=45] (0,0);
\draw [thick] (0.2, 0.2) to[out=280, in=45] (0,0.01);
\end{tikzpicture}
\caption{If a set $\left\{x: u(x) \geq c_n\right\}$ inside a ball is small, then there also exists a slight shrinking of the ball, such that the $(n-1)-$dimensional size of the set $ \cap \left\{x: \|x\|=t\right\}$ is small.}
\end{figure}
\end{center}

The next step is to argue as in the (second) proof of Proposition 2. We rewrite the function $u$ as the stationary solution of a heat equation and obtain the equation
$$ u(x) = \mathbb{E} u(\omega_x(t)) - \mathbb{E} \int_{0}^{t \wedge \tau}{ (\Delta u)(\omega_x(t)) dt}.$$
We now let $t \rightarrow \infty$. In that regime, all the Brownian motion particles are impacted on the boundary and we can reinterpret
$\mathbb{E} u(\omega_x(t))$ as an integral over the boundary with respect to harmonic measure. We integrate this identity in the ball
$\left\{\|x\| \leq 1/100\right\}$. The symmetry of the ball (and the inherited symmetry of the harmonic measure) implies, for some positive constants $c_{n,1}, c_{n,2} > 0$ that only depend on the dimension (and, very mildly and in a way that can be controlled, on $t$)
\begin{align*}
 \int_{\|x\| \leq 1/100}{ u(x) dx} &=c_{n,1} \int_{\|x\|=t}{ u(x) d \mathcal{H}^{n-1}} \\
 &-  c_{n,2} \int_{\|x\| \leq 1/100}{  \mathbb{E} \int_{0}^{\tau}{ (\Delta u)(\omega_x(t)) dt}dx}.
 \end{align*}
The argument can now be concluded as follows: the first integral is certainly small since
$$  \int_{\|x\| \leq 1/100}{ u(x) dx} \leq   \varepsilon + \left|\left\{|u(x)| \geq \varepsilon \right\}\right| \cdot \| u\|_{L^{\infty}}^{} \leq 2 \varepsilon.$$
The second integral is also small. Ignoring the constant in front, which only depends on the dimension, we can estimate
$$\int_{\|x\|=t}{ u(x) d \mathcal{H}^{n-1}} \lesssim_n \varepsilon + \mathcal{H}^{n-1}\left(  \left\{\|x\| = t: |u(x)| \geq \varepsilon \right\} \right) \cdot \|u\|_{L^{\infty}(\|x\| \leq 1)} \lesssim_n \varepsilon.$$
However, the third time is an integral over the expected exit time: starting in the center of the ball, that expected exit time ist $\gtrsim_n 1$ and thus
\begin{align*}
 \int_{\|x\| \leq 1/100}{  \mathbb{E} \int_{0}^{\tau}{ (\Delta u)(\omega_x(t)) dt}dx} &\gtrsim 
    \int_{\|x\| \leq 1/100}{  \mathbb{E} \int_{0}^{\tau}{1 dt}dx}\\
&\geq   \int_{\|x\| \leq 1/100}{  \mathbb{E} \tau dx} \gtrsim_n 1.
\end{align*}
This leads to a contradiction for $\varepsilon$ sufficiently small.
\end{proof}


\begin{thebibliography}{20}

\bibitem{chm2} J. Akeroyd,
Champagne subregions of the disk whose bubbles carry harmonic measure. 
Math. Ann. 323 (2002), no. 2, 267--279.

\bibitem{akc} G.I. Arhipov, A.A. Karacuba and V.N. Cubarikov, Trigonometric Integrals, Math. USSR Izvestija 15, 2 (1980) 211--239.

\bibitem{band} C. Bandle, 
Isoperimetric inequalities. Convexity and its applications, 30--48, Birkhäuser, Basel, 1983.

\bibitem{ban} R. Banuelos and T. Carroll, 
The maximal expected lifetime of Brownian motion. 
Math. Proc. R. Ir. Acad. 111A (2011), no. 1, 1--11.

\bibitem{van} M. van den Berg, 
Heat flow and perimeter in $\mathbb{R}^m$. 
Potential Anal. 39 (2013), no. 4, 369--387. 

\bibitem{biswas} A Biswas and J. L\H{o}rinczi, Maximum principles and Aleksandrov-Bakelman-Pucci type estimates for non-local Schr\"odinger equations with exterior conditions, arXiv:1711.09267

\bibitem{lo1} J. Bolte, A. Daniilidis, and A. Lewis, The \L{}ojasiewicz Inequality for Nonsmooth Subanalytic Functions with Applications to Subgradient Dynamical Systems,  SIAM J. Optim., 17(4), 1205--1223. (2007)



\bibitem{carbery} A. Carbery, 
A uniform sublevel set estimate. Harmonic analysis and partial differential equations, p. 97--103, 
Contemp. Math., 505, Amer. Math. Soc., Providence, RI, 2010. 

\bibitem{carb2} A. Carbery, M. Christ and J. Wright, Multidimensional van der Corput and sublevel set estimates, J. Amer. Math. Soc. 12 (1999), no. 4, 981--1015.

\bibitem{carb4} A. Carbery and J. Wright, What is van der Corput's Lemma in higher dimensions?, Publ. Mat. (2002), p. 13--26.

\bibitem{carb3} A. Carbery, V. Mazya, M. Mitrea and D. Rule, The integrability of negative powers of the solution to the Saint Venant problem, Annali della scuola normale di pisa, Vol. XIII (2014), p. 465--531.


\bibitem{chm1} T. Carroll and J. Ortega-Cerda,
Configurations of balls in Euclidean space that Brownian motion cannot avoid. 
Ann. Acad. Sci. Fenn. Math. 32 (2007), no. 1, 223--234.

\bibitem{lo2} D. D'Acunto, K. Kurdyka, Explicit bounds for the \L{}ojasiewicz exponent in the gradient inequality for polynomials, Annales Polonici Mathematici 87 (2005), 51--61.

\bibitem{do} M. Do Carmo,  Differential geometry of curves and surfaces. Prentice-Hall, Inc., Englewood Cliffs, N.J., 1976.

\bibitem{ch0} S. Gardiner and M. Ghergu, 
Champagne subregions of the unit ball with unavoidable bubbles.
Ann. Acad. Sci. Fenn. Math. 35 (2010), no. 1, 321--329.

\bibitem{gress} P. Gressman, Uniform geometric estimates of sublevel sets. J. Anal. Math. 115 (2011), 251--272.

\bibitem{ch00} W. Hansen and I. Netuka, 
Champagne subdomains with unavoidable bubbles. 
Adv. Math. 244 (2013), 106--116.

\bibitem{katz}  N. H. Katz, Self crossing six sided figure problem, New York J. Math. 5 (1999) 121--130.

\bibitem{katz2} N. H. Katz, E. Krop, and M. Maggioni, On the box problem, Math. Research Letters, 4 (2002): 515--519.

\bibitem{lo3} J. Kollar, An Effective \L{}ojasiewicz Inequality for Real Polynomials, Periodica Mathematica Hungarica 38(3):213--221 (1998)


\bibitem{lierl} J. Lierl and S. Steinerberger, A Local Faber-Krahn inequality and Applications to Schrodinger's Equation,  Comm. PDE 43, 66--81 (2018)

\bibitem{lo4}  S. \L{}ojasiewicz  Ensembles semi-analytiques, preprint IHES, 1965.

\bibitem{lo5} S. \L{}ojasiewicz, Sur les trajectoires du gradient d’une fonction analytique, Geometry seminars, 1982--1983 , 115--117, Univ. Stud. Bologna, Bologna, (1984).

\bibitem{lo6} S. \L{}ojasiewicz, Une propriete topologique des sous-ensembles analytiques reels, Colloques
Internation. du CNRS, No 117 (1962), 87--89.

\bibitem{ch000} J. O'Donovan,
Brownian motion in a ball in the presence of spherical obstacles. 
Proc. Amer. Math. Soc. 138 (2010), no. 5, 1711--1720.

\bibitem{ch1} J. Ortega-Cerda and K. Seip,
Harmonic measure and uniform densities. 
Indiana Univ. Math. J. 53 (2004), no. 3, 905--923.

\bibitem{ch2} J. Pres,
Champagne subregions of the unit disc. 
Proc. Amer. Math. Soc. 140 (2012), no. 11, 3983--3992.

\bibitem{phong} D.H. Phong, E.M. Stein, J. Sturm, Multilinear level set operators, oscillatory integral operators, and Newton polyhedra, Mathematische Annalen 319 (2001) p. 573--596.

\bibitem{preunkert} M. Preunkert,  A Semigroup version of the isoperimetric inequality. Semigroup Forum 68, 233--245 (2004)

\bibitem{manas} M. Rachh and S. Steinerberger, On the location of maxima of solutions of Schroedinger's equation, Comm. Pure Appl. Math, Vol. 71, 1109--1122 (2018).

\bibitem{rs} F. Ricci and E.M. Stein, Harmonic analysis and nilpotent groups and singular integrals
I. Oscillatory integrals, J. Funct. Anal. 73 (1987), 179--194.

\bibitem{rogers} K. Rogers, Sharp van der Corput estimates and minimal divided differences, Proc. Amer. Math. Soc. 133 (2005), 3543--3550.


\bibitem{stein} S. Steinerberger, 
Lower bounds on nodal sets of eigenfunctions via the heat flow. 
Comm. Partial Differential Equations 39 (2014), 2240--2261.

\bibitem{stein2} S. Steinerberger, An Endpoint Alexandrov Bakelman Pucci Estimate in the Plane, Canadian Math. Bull., to appear.

\bibitem{estein} E.M. Stein, Harmonic Analysis, Princeton University Press, Princeton, 1993. 

\end{thebibliography}
\end{document}